\newcommand{\Spec}{\operatorname{Spec}}
\newcommand{\sma}{{\scriptstyle{\wedge}}}
\newcommand{\R}{{\mathbb R}}
\newcommand{\C}{{\mathbb C}}
\newcommand{\Z}{{\mathbb Z}}
\newcommand{\N}{{\mathbb N}}
\newcommand{\A}{{\mathbb A}}
\newcommand{\aone}{{\mathbb A}^1}
\newcommand{\gm}[1]{{{\mathbf G}_{m}^{#1}}}
\newcommand{\ho}[1]{\mathscr{H}_{\aone}({#1})}
\newcommand{\bpi}{\bm{\pi}}
\newcommand{\piaone}{{\bpi}^{\aone}}
\newcommand{\KMW}[1]{{{\mathbf K_{{#1}}^{\mathrm{MW}}}}}
\newcommand{\Addresses}{{
 \bigskip
 \footnotesize
J.~Fasel, Institut Fourier - UMR 5582, Universit\'e Grenoble Alpes, CS 40700, 38058 Grenoble cedex 9, France; \textit{E-mail address:} \url{jean.fasel@gmail.com}

}}
\newcounter{intro}
\theoremstyle{plain}
\newtheorem{thm}{Theorem}[subsection]
\newtheorem{cor}[thm]{Corollary}
\newtheorem{prop}[thm]{Proposition}
\newtheorem*{claim*}{Claim}  
\newtheorem*{thm*}{Theorem}
\newtheorem*{problem*}{Problem}
\theoremstyle{definition}
\theoremstyle{remark}
\newtheorem{rem}[thm]{Remark}
\numberwithin{equation}{section}
\begin{document}
\pagestyle{fancy}
\renewcommand{\sectionmark}[1]{\markright{\thesection\ #1}}
\fancyhead{}
\fancyhead[LO,R]{\bfseries\footnotesize\thepage}
\fancyhead[LE]{\bfseries\footnotesize\rightmark}
\fancyhead[RO]{\bfseries\footnotesize\rightmark}
\chead[]{}
\cfoot[]{}
\setlength{\headheight}{1cm}

\author{Jean Fasel}

\title{{\bf The Vaserstein symbol on real smooth affine threefolds}}
\date{}
\maketitle

\begin{abstract}
We give a necessary and sufficient topological condition for the Vaserstein symbol to be injective on smooth affine real threefolds. 
\end{abstract}

\begin{footnotesize}
\tableofcontents
\end{footnotesize}

\section*{Introduction}

Let $R$ be a ring. Recall that a row $(a_1,\ldots,a_n)\in R^n$ is unimodular if the ideal generated by the $a_i$ is $R$ itself, i.e. $\langle a_1,\ldots,a_n\rangle=R$. It is equivalent to saying that the associated homomorphism $R^n\to R$ is surjective. The second description makes clear that the group $GL_n(R)$ acts on the set $Um_n(R)$ of unimodular rows (by precomposition) and one can consider the orbit set $Um_n(R)/GL_n(R)$. It follows that any subgroup of $GL_n(R)$ also acts on $Um_n(R)$, and in particular so does $E_n(R)\subset GL_n(R)$. It is often useful to see that set of orbits as pointed by the class of $e_1:=(1,0,\ldots,0)$. In \cite[\S 5]{Suslin76}, the authors defined the so-called \emph{Vaserstein symbol} $V:Um_3(R)/E_3(R)\to W_E(R)$, where the right-hand term is the elementary symplectic Witt group described in \cite[\S 3]{Suslin76} (see also \cite[\S 2]{Fasel09b}). The Witt group $W_E(R)$ is generated by skew-symmetric invertible matrices and the symbol $V$ can be explicitly described by 
\[
V(a_1,a_2,a_3)=\begin{pmatrix} 0 & -a_1 & -a_2 & -a_3 \\ a_1 & 0 & -b_3 & b_2 \\ a_2 & b_3 & 0 & -b_1 \\ a_3 & -b_2 & b_1 & 0\end{pmatrix}
\]
where $b_1,b_2,b_3$ are such that $\sum_{i=1}^3 a_ib_i=1$ (the symbol is independent of the choice of such elements). Further, they prove that the map $V$ is bijective under additional condition on $R$ (\cite[Theorem 5.2]{Suslin76}) which are satisfied if $R$ is of Krull dimension $2$. Consequently, the orbit set $Um_3(R)/E_3(R)$ inherits in that case an abelian group structure which was widely used. Later, R. Rao and W. van der Kallen proved in \cite[Corollary 3.5]{Rao94} that $V$ was also bijective when $R$ is a smooth $k$-algebra, where $k$ is a field which is perfect and $C_1$. In the same paper, they also proved that the symbol $V$ was in general not injective for threefolds. More precisely, they proved that $V$ was not injective when $R$ was the coordinate ring of the real algebraic sphere of dimension $3$. Later, D. R. Rao and N. Gupta showed that there was infinitely many real (singular) threefolds for which the Vaserstein symbol was not injective (\cite{Gupta14b}), a result later improved in \cite{Kolte16} where the authors exhibited an uncountable family of smooth real threefold for which $V$ fails to be injective. This activity culminated in a conjecture attributed to the author of the present paper (\cite[Introduction]{Kolte16}) which gave a necessary and sufficient topological condition making the Vaserstein symbol injective for smooth real affine threefolds. 

The aim of this note is to prove the conjecture. More precisely, we prove the following theorem.

\begin{thm*}
Let $X=\Spec (R)$ be a smooth affine real threefold. Let $\mathcal C$ be the set of compact connected components of $X(\R)$ (in the Euclidean topology). Then, the Vaserstein symbol
\[
Um_3(R)/E_3(R)\to W_E(R)
\]
is injective if and only if $\mathcal C=\emptyset$.
\end{thm*}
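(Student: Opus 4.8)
The plan is to translate the algebraic statement into a question about $\aone$-homotopy theory and then invoke the computation of homotopy sheaves of $\A^3 \setminus 0$. The key identification is that for a smooth affine algebra $R$ over a field, $Um_3(R)/E_3(R)$ is in natural bijection with the set of naive $\aone$-homotopy classes $[\Spec R, \A^3 \setminus 0]_{\aone}$, which by results of Morel (and Asok--Fasel for the affine representability) agrees with $[\Spec R, \A^3 \setminus 0]_{\mathscr{H}_{\aone}(k)}$ when $\dim R \le 2 \cdot \mathrm{conn}$, or more relevantly here is computed via the fiber sequence $\A^3\setminus 0 \to B\SL_2 \to B\SL_3$ (or directly via $\A^3 \setminus 0 \simeq_{\aone}$ a motivic sphere which is $1$-connected). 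Since $X = \Spec R$ has dimension $3$ and $\A^3 \setminus 0$ is $\aone$-$1$-connected with $\piaone_2(\A^3\setminus 0) = \KMW{3}$, obstruction theory along the Postnikov tower reduces the computation to cohomology groups $H^i_{\Nis}(X, \piaone_i)$ for $i = 2, 3$, the top one being $H^3_{\Nis}(X, \KMW{3}) = \CH^3(X)$, the Chow--Witt group (oriented Chow group) of $0$-cycles.

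Next I would reduce the injectivity of $V$ to the vanishing of a secondary/transgression obstruction living in these groups. The target $W_E(R)$ of the Vaserstein symbol is, by work of Fasel and others, identified with (a shift of) $[\Spec R, \A^4 \setminus 0 / \Sp_2]$-type data, i.e. it corresponds to $H^2_{\Nis}(X, \KMW{2})$ together with $\SL_3/\Sp_2$ information; concretely $V$ becomes the map on $[\Spec R, \A^3\setminus 0]$ induced by the stabilization $\A^3\setminus 0 \to \A^3\setminus 0/\text{(symplectic action)}$, whose homotopy fiber controls the failure of injectivity. The upshot should be that $\ker/\mathrm{fibers}$ of $V$ is governed by a group built from $H^3_{\Nis}(X,\KMW{3})$ modulo the image of $H^3_{\Nis}(X, \KMW{3})$ under multiplication by the motivic Hopf map $\eta$, i.e. by $\CH^3(X)$ versus the ordinary Chow group $CH^3(X)$ — more precisely by the kernel of the hyperbolic/forgetful map, which is $2$-torsion and detected by the subgroup generated by the fundamental classes of real points. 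So $V$ is injective if and only if the relevant subquotient of $H^3_{\Nis}(X, \KMW{3})$ vanishes.

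The final, and I expect hardest, step is the topological realization. One has the real-cycle class map $H^3_{\Nis}(X, \KMW{3}) \to H^3_{\mathrm{sing}}(X(\R); \Z)$ (composed with, or using, the theorem of Jacobson / Hornbostel--Wendt--Xie--Zibrowius relating Chow--Witt groups to singular cohomology of the real points), and since $X(\R)$ is a (possibly non-compact) real $3$-manifold, $H^3_{\mathrm{sing}}(X(\R);\Z)$ is freely generated by the fundamental classes of the \emph{compact} connected components and vanishes on the non-compact ones. I would show that the obstruction subgroup controlling injectivity of $V$ maps isomorphically — or at least with the same vanishing locus — onto $\bigoplus_{C \in \mathcal C} \Z/2$ (or $\Z$ depending on orientability bookkeeping, which washes out mod the hyperbolic image) under this realization; the ``only if'' direction then follows by exhibiting, on each compact component, an explicit unimodular row (a Nori-type / Bott construction pulled back along a degree-one map $X(\R) \supset C \to S^3$) that is non-trivial in $Um_3(R)/E_3(R)$ but dies in $W_E(R)$, recovering the sphere counterexample of Rao--van der Kallen locally on $\mathcal C$, while the ``if'' direction follows because $\mathcal C = \emptyset$ forces the obstruction group to vanish so $V$ is injective by the homotopy-theoretic analysis. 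The main obstacle is making the real realization of Chow--Witt groups precise enough to pin down the obstruction group on the nose (including the non-compact components genuinely contributing nothing), and ensuring the comparison between the naive orbit set $Um_3(R)/E_3(R)$ and the motivic mapping set is valid in dimension $3$ where one is at the edge of the stable/metastable range.
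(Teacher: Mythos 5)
Your overall outline --- reinterpret both sides motivically, run obstruction theory along a Postnikov tower, detect the obstruction via Jacobson's real cycle class map, and produce explicit Hopf-type classes on compact components for the converse --- does match the shape of the paper's argument, but both central technical steps are either wrong or missing. For the sufficiency direction, the obstruction group you name is not the right one: you assert that the degree-$3$ stage of the Postnikov tower of $\A^3\setminus 0$ contributes $H^3_{\Nis}(X,\KMW{3})=\CH^3(X)$, i.e.\ that $\piaone_3(\A^3\setminus 0)=\KMW{3}$. That is false ($\piaone_2(\A^3\setminus 0)=\KMW{3}$; the sheaf $\piaone_3(\A^3\setminus 0)$ is the complicated object computed by Asok--Fasel), and, more importantly, injectivity of $V$ is not a question about $X$ mapping to $\A^3\setminus 0$ alone but a \emph{relative} question about the stabilization $Q_5\to SL_6/Sp_6$ (note also that $W_E(R)=[X,SL/Sp]_{\aone}=[X,SL_6/Sp_6]_{\aone}$, not the ``$H^2(X,\KMW{2})$ plus $SL_3/Sp_2$'' data you describe). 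The paper compares the Postnikov towers of $\A^3\setminus 0$ and $SL_6/Sp_6$, and the only nontrivial comparison is at level $3$, where the fiber sequence $\Omega(\A^5\setminus 0)\to Q_5\to SL_6/Sp_6$ gives an exact sequence $\piaone_4(SL_6/Sp_6)\to\KMW{5}\to\piaone_3(\A^3\setminus 0)\to\piaone_3(SL_6/Sp_6)\to 0$; so what must be proved when $\mathcal C=\emptyset$ is surjectivity of $H^3(X,\mathbf{GW}_5^3)\to H^3(X,\KMW{5})$. This is done via the exact sequence $H^3(X_{\C},\KMW{5})\to H^3(X,\KMW{5})\to H^3(X,\mathbf{I}^{5})\to 0$ (so $\mathcal C=\emptyset$ kills the $\mathbf{I}^5$ term by the Jacobson computation), a transfer from $X_{\C}$, and the divisibility of $K_2^M(\C)$ to handle the cokernel sheaf over $X_{\C}$. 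None of this is in your sketch, and your substitute claim --- that the kernel of $V$ is governed by $\CH^3(X)$ versus $CH^3(X)$, is $2$-torsion, and is killed by the hyperbolic map --- is unsubstantiated and does not match the actual coefficient sheaves (weight $5$, not $3$; and $H^3(X,\mathbf{I}^5)\cong H^3(X(\R),\Z)$ has free summands on oriented compact components).

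For the necessity direction there is a genuine gap in your plan to ``pull back the Rao--van der Kallen example along a degree-one map $C\to S^3_{\R}$'': such a map is only topological, so it does not produce a unimodular row over $R$, and even granting a candidate row you give no reason why its Vaserstein symbol vanishes in $W_E(R)$ --- which is exactly the point to be proved. The paper's solution is algebraic on both counts: it constructs an explicit morphism $\mathcal H:Q_7\to\A^3\setminus 0$ (through $Q_7\to Q_4$, smashing with $[-1]:S^0\to\gm{}$, and an explicit map $Q_4\sma\gm{}\to\A^3\setminus 0$), checks that its real realization is the topological Hopf map, and shows that the composite $Q_7\to\A^3\setminus 0\to SL_6/Sp_6$ is null because it factors through $Q_7\sma\gm{}$ and $[Q_7\sma\gm{},SL_6/Sp_6]_{\aone}=\mathbf{GW}^{-2}_{-1}=0$; nontrivial input classes in $[X,\A^4\setminus 0]_{\aone}\simeq H^3(X,\KMW{4})$ are then produced from the surjection onto $H^3(X,\mathbf{I}^4)\cong H^3(X(\R),\Z)\neq 0$ when $\mathcal C\neq\emptyset$, and their images in $[X,\A^3\setminus 0]_{\aone}$ are seen to be nontrivial via real realization and the Hopf fibration $S^1_{\R}\to S^3_{\R}\to S^2_{\R}$. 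You would need to supply analogues of all three ingredients (an algebraic Hopf-type map, the vanishing of its stabilization, and an algebraic source of degree-one classes) to close your argument.
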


Our method is as follows. In \cite[proof of Theorem 4.3.1]{Asok14b}, we observed that the Vaserstein symbol had an interpretation in the realm of $\aone$-homotopy theory. More precisely, let $k$ be a perfect field. The smooth affine quadric $Q_5$ with $k[Q_5]=k[x_1,x_2,x_3,y_1,y_2,y_3]/\langle\sum x_iy_i-1\rangle$ is isomorphic to the quotient of algebraic varieties $SL_4/Sp_4$. The latter is in turn isomorphic to the the affine scheme $A^\prime_4$ representing the functor assigning to a ring $R$ the set of invertible skew-symmetric matrices with trivial Pfaffian. The composite $Q_5\to SL_4/Sp_4\to A^\prime_4$ associates to a $6$-tuple $(a_1,a_2,a_3,b_1,b_2,b_3)$ such that $\sum a_ib_i=1$ the matrix $V(a_1,a_2,a_3)$ described above. Now, there is a stabilization map $SL_4/Sp_4\to SL_6/Sp_6$ and it turns out that this map actually determines the injectivity of the Vaserstein symbol. Indeed, let $\ho k$ be the $\aone$-homotopy category defined by Morel and Voevodsky in \cite{Morel99}. We then have $\mathrm{Hom}_{\ho k}(X,Q_5)=Um_3(R)/E_3(R)$ for any smooth affine threefold $X=\Spec R$ and $\mathrm{Hom}_{\ho k}(X,SL_6/Sp_6)=W_E(R)$, while the stabilization map $Q_5\to SL_6/Sp_6$ precisely induces the Vaserstein symbol. In this context, we can use the computation of the homotopy sheaves of $Q_5\simeq \A^3\setminus 0$ obtained in \cite{Asok12c} to prove that the symbol $V$ is injective if $\mathcal C$ is empty. To prove that this condition is also necessary, we produce explicitly a morphism $\A^4\setminus 0\to Q_5$ whose composite with $Q_5\to SL_6/Sp_6$ is homotopy trivial and show that its real realization is the Hopf map $S^3_{\R}\to S^2_{\R}$. If $\mathcal C\neq\emptyset$, this allows to produce non trivial elements in $Um_3(R)/E_3(R)$ whose image under $V$ is trivial. 

Let us now explain the organization of the paper. In Section \ref{sec:preliminaries}, we first quickly review the notions of $\aone$-homotopy theory needed to understand the proof of the main result. In particular, we explain the reinterpretation of the Vaserstein symbol sketched above, and recall the explicit models of motivic spheres obtained in \cite{Asok14b}. We also take the opportunity to provide a few cohomological computations based on a recent preprint of J. Jacobson (\cite{Jacobson16}). We then proceed with the proof that the condition $\mathcal C=\emptyset$ is sufficient for $V$ to be injective. In Section \ref{sec:necessary}, we then show that it is necessary. 

\subsection*{Notation and conventions}

The rings considered in this paper are commutative and unital. If $X$ is a real variety, we denote by $X_{\C}$ the scheme $X_{\C}:=X\times_{\R}\Spec (\C)$.

\subsection*{Acknowledgments}

The author wishes to thank Ravi Rao for motivating him to write this paper. It is also a pleasure to thank Aravind Asok for explaining the morphism $Q_7\to Q_4$ appearing in Section \ref{sec:necessary}.  


\section{Preliminaries}\label{sec:preliminaries}

The purpose of this section is to explain a few basic features of the motivic homotopy category $\ho k$ constructed by F. Morel and V. Voevodsky. In this part of the paper, $k$ is an infinite perfect field.

\subsection{Motivic homotopy theory}

Recall from \cite{Morel99} that a space $\mathscr X$ is a (Nisnevich) sheaf of simplicial sets $\mathrm{Sm}_k\to \mathrm{SSets}$. Basic examples of spaces are smooth $k$-schemes seen as functors $\mathrm{Sm}_k\to \mathrm{Sets}\to \mathrm{SSets}$ and simplicial sets seen as constant sheaves of simplicial sets. These include the simplicial spheres $S^i$ for any $i\geq 0$. In the category of spaces, one can perform many useful constructions, among which the smash product of (pointed) spaces. For instance, if we write $\gm{}$ for the groups scheme representing the functor $X\mapsto O_X(X)^\times$, we can form smash products of $\gm{}$ (pointed by $1$) with itself to obtain spaces $(\gm{})^{\sma j}$ for any integer $j\in\N$, and we can perform smash products with $S^i$ to obtain spaces $S^{i,j}:=S^i\sma(\gm{})^{\sma j}$ for any $i,j\in \N$ that we refer to as \emph{motivic spheres}.

Any space $\mathscr X$ has stalks at the points of the Nisnevich topology, and a (pointed) morphism of spaces $f:\mathscr X\to \mathscr Y$ is said to be a (pointed) weak-equivalence if it induces a weak-equivalence of simplicial sets on stalks. One can put a model structure on $\mathrm{Spc}_k$ that allow to invert weak-equivalences in a good way, and the corresponding homotopy category is the \emph{simplicial homotopy category of smooth schemes}. To get the motivic homotopy category $\ho k$, one further formally invert the morphisms $\mathscr X\times \A^1\to \mathscr X$ induced by the projection $\aone\to \Spec k$ via Bousefield localization. For any two spaces $\mathscr X$ and $\mathscr Y$, we denote by $[\mathscr X,\mathscr Y]_{\aone}$ the set of morphisms between them in $\ho k$.

As noted above, the spaces $S^{i,j}$ are objects of $\ho k$, and it is not clear in general that they have "geometric" models in that category, i.e. that there exists smooth schemes which are isomorphic to $S^{i,j}$. While the answer is known to be negative in general, there are two important particular cases. For any $n\geq 1$, let $Q_{2n-1}$ be the smooth affine $k$-scheme whose ring of global sections is $k[Q_{2n-1}]=k[x_1,\ldots,x_n,y_1,\ldots,y_n]/\langle \sum x_iy_i-1\rangle$, and let $Q_{2n}$ be the smooth affine scheme whose global sections are $k[Q_{2n}]=k[x_1,\ldots,x_n,y_1,\ldots,y_n,z]/\langle \sum x_iy_i-z(1-z)\rangle$. In \cite[Theorem 2.2.5]{Asok14}, we proved that there are explicit isomorphisms $S^{n-1,n}\simeq Q_{2n-1}\simeq \A^n\setminus 0$ and $S^{n,n}\simeq Q_{2n}$ in $\ho k$ for any $n\geq 1$. Interestingly, these motivic spheres appear naturally in the context of unimodular rows. Indeed, there are natural bijections $[X,Q_{2n-1}]_{\aone}=[X,\A^n\setminus 0]_{\aone}=Um_{n}(R)/E_n(R)$ for any smooth affine $k$-scheme $X=\Spec R$ (\cite[\S 4.2]{Asok12b} and \cite[Theorem 4.2.1]{Asok15b}). 

In $\ho k$, one can define for any $n\in\N$ the homotopy sheaves $\piaone_n(\mathscr X)$ associated to a (pointed) space $\mathscr X$. These are Nisnevich sheaves on $\mathrm{Sm}_k$ which are extremely hard to compute in general in analogy with classical topology. A celebrated result of F. Morel states that the homotopy sheaves of $\A^n\setminus 0$ are of the following form (provided $n\geq 2$):
\[
\piaone_{i}(\A^n\setminus 0)=\begin{cases} 0 & \text{if $i<n-1$.} \\ \KMW n & \text{if $i=n-1$.}\end{cases}
\]
The sheaf $\KMW n$ is the \emph{(unramified) Milnor-Witt $K$-theory sheaf} and is defined in \cite[\S 3]{Morel08}. For $i\geq n$, our knowledge is still limited (see however \cite{Asok12a} for $n=2$ and \cite{Asok12c} for $n=3$).  

To conclude this section, let us recall from \cite[\S 3.3]{Morel99} that there is a realization functor $\ho k\to \mathcal H$ in case $k=\R$, where $\mathcal H$ is the usual homotopy category of topological spaces. Indeed, there is a realization functor $\ho k\to \mathcal H(\Z/2)$ where $\mathcal H(\Z/2)$ is the homotopy category of equivariant $\Z/2$-topological spaces. This functor associates to a smooth scheme $X$ over $\R$ the topological space $X(\C)$ together with the $\Z/2$-action induced by complex conjugation. Taking fixed points, we finally get a functor $\ho k\to \mathcal H$ associating to a smooth scheme $X$ the topological space $X(\R)$.

\subsection{Cohomology with coefficients in $\mathbf{I}^n$}

As mentioned in the previous section, the Milnor-Witt $K$-theory sheaf $\KMW n$ appears as the first nontrivial homotopy sheaf of $\A^n\setminus 0$. This sheaf is possibly better understood using two more classical sheaves. For the first one, assume that $k$ is of characteristic different from $2$ and consider the presheaf $X\mapsto W(X)$, where $W(X)$ is the Witt group of symmetric bilinear forms as defined by M. Knebusch (\cite{Knebusch76}). For any $n\in\N$, one can consider its subpresheaf $I^n(X)\subset W(X)$ of powers of the fundamental ideal $I(X)\subset W(X)$ and its associated (Zariski) sheaf $\mathbf{I}^n$ on $\mathrm{Sm}_k$. By construction, there is an epimorphism of sheaves $\KMW n\to \mathbf{I}^n$ for any $n\in \N$ (\cite[\S3]{Morel99}). For real varieties, the latter recovers very important topological information about $X(\R)$. Indeed, for any $m\in \N$, let $H^m(X(\R),\Z)$ be the singular cohomology group of $X(\R)$. Then, one has the following comparison theorem which is due to J. Jacobson (\cite[Corollary 8.3]{Jacobson16}).

\begin{thm}
Let $X$ be scheme over $\R$ which is separated and of finite type and let $d$ be its dimension. Then, the signature map induces isomorphisms
\[
H^m(X,\mathbf{I}^n)\to H^m(X(\R),\Z)
\]
for any $n\geq d+1$ and any integer $m\geq 0$. 
\end{thm}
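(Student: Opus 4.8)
The plan is to compare the two sides term by term through Gersten-type resolutions, working with the real spectrum in order to compute the right-hand side. Since both $H^*(-,\mathbf{I}^n)$ and $H^*((-)(\R),\Z)$ satisfy Mayer--Vietoris, resolution of singularities in characteristic zero lets one reduce to $X$ smooth. On the topological side I use the comparison between the singular cohomology $H^m(X(\R),\Z)$ and the cohomology $H^m(X_r,\Z)$ of the real spectrum $X_r$, due to Delfs--Knebusch and Scheiderer, together with Scheiderer's result that $\underline{\Z}$ on $X_r$ admits a Gersten (Cousin) resolution for the filtration of $X_r$ by codimension of support, whose term in degree $p$ is $\bigoplus_{x\in X^{(p)}}C(\mathrm{Sper}\,\kappa(x);\Z_{\Lambda_x})$ --- continuous $\Z$-valued functions on the Boolean space $\mathrm{Sper}\,\kappa(x)$, twisted by an orientation character $\Lambda_x$ which is the real realization of a determinant line. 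On the algebraic side, for $X$ smooth, $H^m(X,\mathbf{I}^n)$ is the cohomology of the Rost--Schmid complex, whose degree-$p$ term is $\bigoplus_{x\in X^{(p)}}I^{n-p}(\kappa(x);\Lambda_x)$ (with $I^j:=W$ for $j\le 0$, and the same twist). The total signature, divided by $2^{n-p}$ on the degree-$p$ term, defines a morphism of complexes; it commutes with the differentials because the second residue maps of Witt theory are intertwined with the topological boundary maps under the signature.

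Everything then reduces to a purely local statement: for a finitely generated field extension $F/\R$, the map $2^{-j}\,\mathrm{sign}\colon I^j(F)\to C(\mathrm{Sper}\,F,\Z)$ is an isomorphism as soon as $j\ge \mathrm{vcd}_2(F)+1$. If $F$ is not formally real then $\mathrm{Sper}\,F=\emptyset$ while $W(F)$ is torsion (Pfister), so both sides vanish in that range once $I^j(F)$ is known torsion-free. If $F$ is formally real, surjectivity onto $C(\mathrm{Sper}\,F,\Z)$ holds for every $j$: a continuous $\Z$-valued function on $\mathrm{Sper}\,F$ is a $\Z$-combination of indicator functions of basic clopen sets $\bigcap_i H(a_i)$, and such an indicator is realized as $2^{-j}\,\mathrm{sign}$ of an appropriate $j$-fold Pfister form $\langle\langle\pm a_1,\dots,\pm a_r,-1,\dots,-1\rangle\rangle$. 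The content is injectivity --- the absence of a nonzero element of $I^j(F)$ with vanishing total signature --- and by Pfister's local--global principle the kernel of the total signature on $W(F)$ is exactly its torsion subgroup, so injectivity is precisely the vanishing of the torsion of $I^j(F)$ for $j\ge \mathrm{vcd}_2(F)+1$. This is the one deep input, drawn from the algebraic theory of quadratic forms: it rests on the Milnor conjecture (whence $I^j/I^{j+1}\cong\mathbf{K}^M_j/2$ and graded Witt theory is governed by mod-$2$ Galois cohomology) together with the stabilization results of Arason--Elman--Jacob for fields of finite virtual $2$-cohomological dimension. I would invoke these rather than reprove them, and this is the step I expect to be the main obstacle.

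It remains to feed the local statement back in. For $x\in X^{(p)}$ one has $\mathrm{trdeg}(\kappa(x)/\R)=d-p$, hence, applying Serre's bound to $\kappa(x)(\sqrt{-1})/\C$, $\mathrm{vcd}_2(\kappa(x))\le d-p$; so the hypothesis $n\ge d+1$ forces $n-p\ge(d-p)+1\ge\mathrm{vcd}_2(\kappa(x))+1$, and the local map is an isomorphism on every term in every degree. Therefore the signature morphism is an isomorphism of complexes, and taking cohomology gives $H^m(X,\mathbf{I}^n)\cong H^m(X_r,\Z)\cong H^m(X(\R),\Z)$ for all $m$; the bound $n\ge d+1$ is sharp precisely because the local bound is sharp at the generic point. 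The genuinely technical work --- the reduction to the smooth case, Scheiderer's Gersten resolution on $X_r$ with its orientation twist, and the compatibility of the Witt-theoretic residues with the topological boundary maps --- lies elsewhere, but none of it should present a conceptual difficulty once the local isomorphism is available.
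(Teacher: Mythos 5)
The statement you are proving is not proved in the paper at all: it is quoted verbatim from Jacobson (cited there as \cite[Corollary 8.3]{Jacobson16}), so the comparison is with that source rather than with an in-paper argument. Your overall strategy --- total signature rescaled by powers of $2$, the local input that $I^j(F)$ is torsion-free and maps onto $C(\mathrm{Sper}\,F,\Z)$ once $j\geq \mathrm{vcd}_2(F)+1$ (Arason--Elman--Jacob plus the resolution of the Milnor conjecture), the bound $\mathrm{vcd}_2(\kappa(x))\leq d-p$ via Serre, and the comparison $H^m(X_r,\Z)\cong H^m(X(\R),\Z)$ --- is the right skeleton and is essentially the same local mechanism that drives Jacobson's proof. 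In the smooth case your Gersten/Rost--Schmid comparison is a legitimate route (it is carried out carefully in the literature on the real cycle class map), and since the present paper only ever applies the theorem to smooth affine threefolds, that case would suffice for its purposes. The compatibility of the Witt-theoretic residues with the topological boundary maps, which you defer, is genuinely delicate (twists and sign conventions), but you flag it honestly.

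The genuine gap is your first reduction. The theorem is stated for an arbitrary separated finite-type $\R$-scheme, and ``Mayer--Vietoris plus resolution of singularities'' does not reduce it to the smooth case: Mayer--Vietoris handles Zariski open covers, whereas a resolution is an abstract blow-up, and neither $H^*(-,\mathbf{I}^n)$ nor $H^*((-)(\R),\Z)$ is known (or claimed by you) to satisfy descent for such squares; moreover the real locus of a resolution is genuinely different from $X(\R)$ --- already blowing up a point replaces it by a real projective space and changes the homotopy type --- so there is no direct comparison to feed into such a descent argument even if one had it. This matters structurally, because the Rost--Schmid/Gersten machinery you rely on is only available for smooth $X$, so the singular case cannot be reached by your method without a new idea. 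Jacobson's argument avoids the issue entirely by working sheaf-theoretically on $X$ itself: the stabilization statement is proved stalkwise for local rings, combined with the vanishing of higher direct images from the real spectrum and the Delfs--Knebusch comparison, which is exactly what makes the result valid for arbitrary separated finite-type $X$ rather than only for smooth varieties. Either restrict your claim to smooth $X$ (enough for this paper) or replace the reduction step by an argument of that stalkwise type.
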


\begin{cor}\label{cor:computation}
Let $X$ be a smooth real affine variety and let $\mathcal C$ be the set of compact connected components of $X(\R)$. Let $\mathcal C_1\subset \mathcal C$ be the subset of oriented such components and $\mathcal C_2=\mathcal C\setminus \mathcal C_1$. We have then for any $j>0$ an isomorphism
\[
H^d(X,\mathbf{I}^{d+j})\simeq \bigoplus_{\mathcal C_1}\Z  \oplus \bigoplus_{\mathcal C_2} \Z/2. 
\]
\end{cor}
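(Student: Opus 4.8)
The plan is to read the statement off from the comparison theorem above essentially for free. Since $j>0$ we have $d+j\ge d+1$, so applying that theorem with $n=d+j$ and $m=d$ produces an isomorphism $H^d(X,\mathbf{I}^{d+j})\isomto H^d(X(\R),\Z)$, where the right-hand side is ordinary singular cohomology. Everything then reduces to computing the top-degree singular cohomology of the real locus $X(\R)$, and I would carry this out by decomposing $X(\R)$ into connected components and handling each one in turn.

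The first step is to recall that, because $X$ is smooth, the Jacobian criterion exhibits $X(\R)$ (with its Euclidean topology) as a smooth manifold without boundary, each connected component of which has dimension at most $d=\dim X$; moreover, being a semialgebraic set, $X(\R)$ has only finitely many connected components. Writing $X(\R)=\coprod_\alpha M_\alpha$ for this finite decomposition, one gets $H^d(X(\R),\Z)=\bigoplus_\alpha H^d(M_\alpha,\Z)$, so it is enough to identify $H^d(M_\alpha,\Z)$ for each component. Components of dimension $<d$ contribute nothing for dimension reasons. For a connected $d$-manifold $M_\alpha$ without boundary, the classical description of its top cohomology group (via Poincar\'e duality together with universal coefficients) gives $H^d(M_\alpha,\Z)=0$ if $M_\alpha$ is non-compact, $H^d(M_\alpha,\Z)\simeq\Z$ if $M_\alpha$ is compact and orientable, and $H^d(M_\alpha,\Z)\simeq\Z/2$ if $M_\alpha$ is compact and non-orientable. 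Hence exactly the compact components, i.e. those lying in $\mathcal C=\mathcal C_1\sqcup\mathcal C_2$, contribute: an orientable one contributes a copy of $\Z$ and a non-orientable one a copy of $\Z/2$. Assembling, one obtains
\[
H^d(X,\mathbf{I}^{d+j})\simeq H^d(X(\R),\Z)\simeq\bigoplus_{\mathcal C_1}\Z\oplus\bigoplus_{\mathcal C_2}\Z/2,
\]
which is the claim.

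I do not expect a genuine obstacle: the real content is supplied by the theorem above, and the remaining inputs --- finiteness of the number of connected components of a real algebraic set, and the computation of the top cohomology of a manifold --- are completely standard. The only points worth stating carefully are that the trichotomy "non-compact / compact orientable / compact non-orientable" is exhaustive for connected manifolds without boundary, that components of dimension $<d$ genuinely drop out of $H^d$, and that "oriented" in the statement should be read as "orientable", which is the property that governs $H^d$.
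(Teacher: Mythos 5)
Your proof is correct and follows essentially the same route as the paper: apply Jacobson's comparison theorem with $n=d+j\geq d+1$, then compute $H^d(X(\R),\Z)$ component by component, with compact orientable components giving $\Z$ (Poincar\'e duality), compact non-orientable ones giving $\Z/2$ (universal coefficients), and non-compact ones giving $0$. The only cosmetic difference is the non-compact case, where you invoke the standard vanishing of top-degree cohomology of a connected non-compact manifold directly, while the paper deduces it by embedding such a component into a compact one; both are standard and equivalent in substance.
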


\begin{proof}
In view of the above theorem, it suffices to show that $H^d(C,\Z)=\Z$ if $C\in \mathcal C_1$, $H^d(C,\Z)=\Z/2$ if $C\in \mathcal C_2$ and $H^d(C,\Z)=0$ if $C$ is a non compact connected component of $X(\R)$. The first case is a direct consequence of Poincar\'e duality \cite[Theorem 3.30]{Hatcher02} and the second case follows from the universal coefficient theorem and \cite[Theorem 3.26, Corollary 3.28]{Hatcher02}. Finally, suppose that $C$ is a non compact connected component of $X(\R)$. Arguing as in \cite[Lemma 4.16]{Bhatwadekar06}, we can see $C$ as a strict submanifold of a compact connected component $C^\prime$. It follows easily from the two previous cases that $H^d(C,\Z)=0$.
\end{proof}

Let now $\mathbf{K}^{\mathrm{M}}_{n}$ be the unramified Milnor $K$-theory sheaf considered for instance in \cite[\S 2]{Morel05}. Its sections over a finitely generated field extension $L/k$ form precisely the Milnor $K$-theory group $K^M_n(L)$. One can consider its subsheaf $2\mathbf{K}^{\mathrm{M}}_{n}$ and it turns out (using Voevodsky's affirmation of Milnor conjecture) that there is an exact sequence of sheaves
\[
0\to 2\mathbf{K}^{\mathrm{M}}_{n}\to \KMW {n}\to \mathbf{I}^{n}\to 0.
\]
Using this sequence, we now slightly adapt \cite[Proposition 5.5]{Fasel08b} to get the following computation of $H^d(X,\KMW {d+j})$.

\begin{prop}\label{prop:refinement}
Let $X$ be a smooth real affine variety of dimension $d$. Then, for any $j>0$, we have an exact sequence
\[
H^d(X_{\C},\KMW {d+j})\to H^d(X,\KMW {d+j})\to H^d(X,\mathbf{I}^{d+j})\to 0.
\]
\end{prop}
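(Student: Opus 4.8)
The plan is to extract the desired exact sequence from the long exact cohomology sequence associated to the short exact sequence of sheaves $0\to 2\mathbf{K}^{\mathrm{M}}_{d+j}\to \KMW{d+j}\to \mathbf{I}^{d+j}\to 0$ recalled above, combined with a transfer-argument controlling the $2\mathbf{K}^{\mathrm{M}}$-part. First I would write down the long exact sequence
\[
\cdots\to H^d(X,2\mathbf{K}^{\mathrm{M}}_{d+j})\to H^d(X,\KMW{d+j})\to H^d(X,\mathbf{I}^{d+j})\to H^{d+1}(X,2\mathbf{K}^{\mathrm{M}}_{d+j})\to\cdots.
\]
Since $X$ is an affine variety of dimension $d$, the Gersten/Bloch--Ogus resolution of the Nisnevich sheaf $2\mathbf{K}^{\mathrm{M}}_{d+j}$ has length $d$, so $H^{d+1}(X,2\mathbf{K}^{\mathrm{M}}_{d+j})=0$; this already gives surjectivity of $H^d(X,\KMW{d+j})\to H^d(X,\mathbf{I}^{d+j})$, which is the last map in the claimed sequence. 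It remains to identify the image of $H^d(X,2\mathbf{K}^{\mathrm{M}}_{d+j})$ in $H^d(X,\KMW{d+j})$ with (a quotient of) $H^d(X_{\C},\KMW{d+j})$.

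For that, the key input is the base-change map along $\pi\colon X_{\C}\to X$ together with its transfer. Following the strategy of \cite[Proposition 5.5]{Fasel08b}, I would use that $2\mathbf{K}^{\mathrm{M}}_{n}$ is, after pushing forward from $X_{\C}$, closely related to $\mathbf{K}^{\mathrm{M}}_{n}$: the composite of the transfer $\pi_*\pi^*\mathbf{K}^{\mathrm{M}}_n\to\mathbf{K}^{\mathrm{M}}_n$ with the unit is multiplication by $2$ (the degree of the étale double cover), so the image of $H^d(X,2\mathbf{K}^{\mathrm{M}}_{d+j})\to H^d(X,\mathbf{K}^{\mathrm{M}}_{d+j})$ is contained in the image of the transfer $H^d(X_{\C},\mathbf{K}^{\mathrm{M}}_{d+j})\to H^d(X,\mathbf{K}^{\mathrm{M}}_{d+j})$, and conversely. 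Lifting this through the surjection $\KMW{d+j}\to\mathbf{K}^{\mathrm{M}}_{d+j}/(\cdots)$—more precisely, using that over $X_{\C}$ the Witt-theoretic part is controlled (a field of characteristic $0$ with no real places has $\mathbf{I}^{d+j}$-cohomology behaving well, in fact $-1$ is a sum of squares so $2$ is invertible-up-to-torsion phenomena apply)—one identifies the relevant subgroup of $H^d(X,\KMW{d+j})$ with the image of $H^d(X_{\C},\KMW{d+j})\to H^d(X,\KMW{d+j})$. Splicing this identification into the long exact sequence yields the three-term exact sequence of the statement.

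The main obstacle I anticipate is the bookkeeping at the junction between Milnor $K$-theory and Milnor--Witt $K$-theory over $X_{\C}$: one must check that the piece of $H^d(X,\KMW{d+j})$ killed by the map to $H^d(X,\mathbf{I}^{d+j})$ is exactly hit by $H^d(X_{\C},\KMW{d+j})$ and not merely by its Milnor $K$-theory quotient. This requires knowing that over the complex variety $X_{\C}$ the sheaf map $\KMW{d+j}\to\mathbf{K}^{\mathrm{M}}_{d+j}$ is, in the relevant cohomological degree $d$, close enough to an isomorphism—equivalently that $H^d(X_{\C},\mathbf{I}^{d+j})$ contributes nothing new—which should follow from the fact that $X_{\C}(\mathbb{R})=\emptyset$ forces the signature/real-étale cohomology to vanish, so Jacobson's comparison theorem gives $H^d(X_{\C},\mathbf{I}^{d+j})=0$ for $d+j>\dim X_{\C}=d$. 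With that vanishing in hand, the diagram chase comparing the long exact sequences for $X$ and $X_{\C}$ closes, and one reads off the asserted exactness; the rest is the routine Gersten-complex dimension count already used for the surjectivity at the right end.
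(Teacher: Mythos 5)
Your skeleton (the long exact sequence attached to $0\to 2\mathbf{K}^{\mathrm{M}}_{d+j}\to \KMW{d+j}\to \mathbf{I}^{d+j}\to 0$, the vanishing of $H^{d+1}(X,2\mathbf{K}^{\mathrm{M}}_{d+j})$ for reasons of cohomological dimension, and a transfer along $X_{\C}\to X$) is indeed the paper's, but the key middle step has a genuine gap. Your projection-formula argument only compares the two relevant subgroups \emph{after} pushing them into $H^d(X,\mathbf{K}^{\mathrm{M}}_{d+j})$ (and even there, the inclusion of the transfer image into the image of the $2\mathbf{K}^{\mathrm{M}}$-classes needs divisibility of $K^{\mathrm{M}}_j(\C)$, not just $\mathrm{tr}\circ\mathrm{res}=2$). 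What the proposition requires is an equality of images inside $H^d(X,\KMW{d+j})$, and the comparison map $H^d(X,\KMW{d+j})\to H^d(X,\mathbf{K}^{\mathrm{M}}_{d+j})$ is far from injective: by the sequence $0\to\mathbf{I}^{d+j+1}\to\KMW{d+j}\to\mathbf{K}^{\mathrm{M}}_{d+j}\to 0$ its kernel is the image of $H^d(X,\mathbf{I}^{d+j+1})$, which by Jacobson's theorem is (a quotient of) $H^d(X(\R),\Z)$ and has no reason to vanish precisely in the interesting cases. Concretely, if $\alpha\in H^d(X,2\mathbf{K}^{\mathrm{M}}_{d+j})$ and you choose $\beta\in H^d(X_{\C},\mathbf{K}^{\mathrm{M}}_{d+j})$ whose transfer equals the image of $\alpha$ in $H^d(X,\mathbf{K}^{\mathrm{M}}_{d+j})$, then any lift $\tilde\beta\in H^d(X_{\C},\KMW{d+j})$ (essentially unique, since $H^d(X_{\C},\mathbf{I}^{d+j+1})=0$) satisfies only that $\mathrm{tr}(\tilde\beta)$ and the class of $\alpha$ in $H^d(X,\KMW{d+j})$ agree modulo the image of $H^d(X,\mathbf{I}^{d+j+1})$. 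Your vanishing $H^d(X_{\C},\mathbf{I}^{d+j})=0$ lives on the $X_{\C}$-side; it gives the easy inclusion $\operatorname{im}(\mathrm{tr})\subseteq\ker\bigl(H^d(X,\KMW{d+j})\to H^d(X,\mathbf{I}^{d+j})\bigr)$, but it cannot remove the discrepancy on the $X$-side, so the claim that ``the diagram chase closes'' is unjustified — in fact circular, since that discrepancy is exactly the piece the proposition is meant to control.

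The paper sidesteps this by working one level earlier, at the level of the Rost--Gersten complexes: the finite morphism $X_{\C}\to X$ induces a morphism of the explicit flasque resolutions, and the proof of Proposition 5.4 in \cite{Fasel08b} shows that this morphism of complexes induces a \emph{surjection} $H^d(X_{\C},\mathbf{K}^{\mathrm{M}}_{d+j})\to H^d(X,2\mathbf{K}^{\mathrm{M}}_{d+j})$ onto the $2\mathbf{K}^{\mathrm{M}}$-cohomology itself, not merely onto its image in $H^d(X,\mathbf{K}^{\mathrm{M}}_{d+j})$. The point is a closed-point computation: the residue fields in codimension $d$ are $\R$ or $\C$, and the norm $K^{\mathrm{M}}_j(\C)\to K^{\mathrm{M}}_j(\kappa(x))$ has image exactly $2K^{\mathrm{M}}_j(\kappa(x))$ because $K^{\mathrm{M}}_j(\C)$ is divisible; hence the transfer already surjects at the level of top-degree cycle groups of the subcomplex computing $H^d(X,2\mathbf{K}^{\mathrm{M}}_{d+j})$. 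Since this factorization happens before mapping into $\KMW{d+j}$-cohomology, it is compatible (via the inclusion $2\mathbf{K}^{\mathrm{M}}_{d+j}\subset\KMW{d+j}$ and the corresponding transfer on Milnor--Witt $K$-theory) with the transfer $H^d(X_{\C},\KMW{d+j})\to H^d(X,\KMW{d+j})$, and splicing it into the long exact sequence gives exactness in the middle; your dimension argument then gives surjectivity on the right. So the missing ingredient in your proposal is precisely this cycle-level surjectivity onto $H^d(X,2\mathbf{K}^{\mathrm{M}}_{d+j})$, which is the content of the reference the paper quotes.
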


\begin{proof}
The finite morphism $X_{\C}\to X$ induces a morphism between the explicit flasque resolutions of $\mathbf{K}^{\mathrm{M}}_{d+j}$ provided for instance in \cite[\S 6]{Rost96}. Now, \cite[proof Proposition 5.4]{Fasel08b} shows that this morphism of complexes yields a surjective homomorphism $H^d(X_{\C},\mathbf{K}^{\mathrm{M}}_{d+j})\to H^d(X,2\mathbf{K}^{\mathrm{M}}_{d+j})$. We conclude using the long exact sequence induced by the exact sequence of sheaves 
\[
0\to 2\mathbf{K}^{\mathrm{M}}_{d+j}\to \KMW {d+j}\to \mathbf{I}^{d+j}\to 0.
\]
\end{proof}

\subsection{Reinterpretation of the Vaserstein symbol}

We now elaborate a bit more the reinterpretation of the Vaserstein symbol in $\ho k$ hinted at in the introduction. As explained in \cite[proof of Theorem 4.3.1]{Asok14b}, we have an isomorphism $Q_5\to SL_4/Sp_4$ and we can consider the composite of this isomorphism with the stabilization morphism $SL_4/Sp_4\to SL/Sp$. This yields a map $Q_5\to SL/Sp$ which is called \emph{degree map} in {\it{loc.cit.}} The interesting feature of $SL/Sp$ is that it represents the (reduced) higher Grothendieck-Witt group $GW_1^3(X)$ as defined by Schlichting (see for instance \cite{Schlichting09} and \cite{Schlichting10}).  Now, this group coincides for affine varieties with the group $W_E(X)$ (\cite[\S 4]{Fasel10b}) and we get for any smooth affine variety $X=\Spec R$ over $k$ an induced map
\[
Um_3(R)/E_3(R)=[X,Q_5]_{\aone}\to [X,SL/Sp]_{\aone}=W_E(R)
\]
which coincides with the (opposite of the) Vaserstein symbol (\cite[proof of Theorem 4.3.1]{Asok14b}). On the other hand, it follows from \cite[Proposition 4.2.2]{Asok12c} that for a smooth affine threefold as above, we have $[X,SL/Sp]_{\aone}=[X,SL_6/Sp_6]_{\aone}$ and a fiber sequence
\begin{equation}\label{eqn:fiber}
\Omega (\A^5\setminus 0)\to Q_5\to SL_6/Sp_6
\end{equation}
It follows that the term $\Omega (\A^5\setminus 0)$ controls the kernel of the Vaserstein symbol.


\section{A sufficient condition for the symbol to be injective}\label{sec:sufficient}
In this section, we will show that if $\mathcal C$ is empty, then the Vaserstein symbol 
\[
V:Um_3(R)/E_3(R)\to W_E(X)
\]
is injective. In view of the $\aone$-homotopy reinterpretation above, it suffices then to show that for a real smooth affine threefold $X$ such that $\mathcal C=\emptyset$ we have a bijection $[X,\A^3\setminus 0]_{\aone}\to [X,\A^3,SL_6/Sp_6]_{\aone}$. To see this, we can consider the Postnikov towers (as constructed for instance in \cite[\S 6]{Asok12a}) associated to both $\mathscr X:=\A^3\setminus 0$ and $\mathscr Y:=SL_6/Sp_6$. This yields a commutative diagram of spaces
\[
\xymatrix@C=2em{K(\piaone_2(\mathscr X),1)\ar[r]\ar[d]_-{K(f,1)} & K(\piaone_3(\mathscr X),3)\ar[r]\ar[d]_-{K(f,3)} & \mathscr X^{(3)}\ar[r]\ar[d] & K(\piaone_2(\mathscr X),2)\ar[r]\ar[d]_-{K(f,2)} & K(\piaone_3(\mathscr X),4)\ar[d]_-{K(f,4)} \\
K(\piaone_2(\mathscr Y),1)\ar[r] & K(\piaone_3(\mathscr Y),3)\ar[r] & \mathscr Y^{(3)}\ar[r] & K(\piaone_2(\mathscr Y),2)\ar[r] & K(\piaone_3(\mathscr Y),4)}
\]
where the vertical maps are all induced by $f$. Since $X$ is a threefold, it follows from \cite[Proposition 6.2]{Asok12a} that $[X,\mathscr X^{(3)}]_{\aone}=[X,\mathscr X]_{\aone}$ (respectively that $[X,\mathscr Y^{(3)}]_{\aone}=[X,\mathscr Y]_{\aone}$). A diagram chase shows that it suffices to prove that the maps $K(f,i)$ induce isomorphisms after applying $[X,\_]_{\aone}$. Now, we know that $f$ induces an isomorphism $\piaone_2(\mathscr X)\to \piaone_2(\mathscr Y)$ and therefore $K(f,2)$ and $K(f,1)$ are isomorphisms. On the other hand, $[X,K(\mathcal F,4)]_{\aone}=H^4_{Nis}(X,\mathcal F)=0$ for any strictly $\aone$-invariant sheaf $\mathcal F$ on $\mathrm{Sm}_k$ thus showing that $K(f,4)$ also induces an isomorphism. We are then reduced to prove that $K(f,3)$ yields an isomorphism. The fiber sequence (\ref{eqn:fiber}) yields an exact sequence of sheaves
\[
\piaone_4(SL_6/Sp_6)\to \KMW 5\to \piaone_3(\A^3\setminus 0)\to \piaone_3(SL_6/Sp_6)\to 0.
\]
In order to prove that the morphism of sheaves $\piaone_3(\A^3\setminus 0)\to \piaone_3(SL_6/Sp_6)$ induces an isomorphism $H^3(X,\piaone_3(\A^3\setminus 0))\simeq H^3(X,\piaone_3(SL_6/Sp_6))$, it suffices to show that the morphism of sheaves $\piaone_4(SL_6/Sp_6)\to \KMW 5$ induces a surjective homomorphism $H^3(X,\piaone_4(SL_6/Sp_6))\to H^3(X,\KMW 5)$. Now, \cite[Proposition 4.2.2]{Asok12c} shows that $\piaone_4(SL_6/Sp_6)=\mathbf{GW}_5^3$ and that $\mathbf{GW}_5^3=\piaone_4(SL_6/Sp_6)\to \KMW 5$ is the morphism $\chi_5$ considered in \cite[proof of Theorem 4.3.1]{Asok12c}. Consider the finite morphism $X_{\C}\to X$. The transfer map for this morphism yields a commutative diagram
\[
\xymatrix{H^3(X_{\C},\mathbf{GW}_5^3)\ar[r]\ar[d] & H^3(X,\mathbf{GW}_5^3)\ar[d] \\
H^3(X_{\C},\KMW 5)\ar[r] & H^3(X,\KMW 5)}
\]
and Proposition \ref{prop:refinement} shows that the bottom horizontal map is surjective. It suffices then to prove that the left-hand vertical map is surjective to conclude. We know that the cokernel of $\mathbf{GW}_5^3\to \KMW 5$ is isomorphic to the sheaf denoted by $\mathbf{F}_5$ in \cite{Asok12c}, which is a quotient of the sheaf $\mathbf{T}_5$ considered in \cite[\S 3.6]{Asok12b}. As $\mathbf{I}^5=0$ on $X_{\C}$, The sheaf $\mathbf{T}_5$ reduces to the sheaf $\mathbf{S}_5$ which is a quotient of $\mathbf{K}^M_5/24$. We are then reduced to prove that $H^3(X_{\C},\mathbf{K}^M_5/24)=0$ which follows from the fact that $\mathbf{K}_2^M(\C)$ is divisible.

\begin{rem}
When $X_{\C}$ is moreover supposed to be rational and oriented, then $H^3(X,\KMW 5)=0$ by \cite[Theorem 5.7]{Fasel08b} and the argument is much simpler.
\end{rem}


\section{A necessary condition for the symbol to be surjective}\label{sec:necessary}

We now suppose that the set $\mathcal C$ is non empty and derive from this that the Vaserstein symbol is not injective. Our strategy is to construct an explicit morphism $Q_7\to \A^3\setminus 0$ whose composite with $\A^3\setminus 0\to SL_6/Sp_6$ is trivial. It follows that the image of $[X,Q_7]_{\aone}$ in $[X,\A^3\setminus 0]_{\aone}$ under this map furnishes candidates for the non injectivity of the Vaserstein symbol. We are reduced to show that under our condition, this image is non trivial.

We start with the definition of an explicit morphism $Q_7\to Q_4$. By definition, if $R$ is a commutative ring we have $Q_7(R)=\{a_1,\ldots,a_4,b_1,\ldots,b_4\in R\vert \sum a_ib_i=1\}$ and $Q_4(R)=\{x_1,x_2,y_1,y_2,z\in R\vert x_1y_1+x_2y_2=z(1-z)\}$. The set $Q_7(R)$ can be seen as the set $\{ M,N\in M_2(R) \vert \det M+\det N=1\}$ via 
\[
(a_1,\ldots,a_4,b_1,\ldots,b_4)\mapsto \left ( \begin{pmatrix} a_1 & a_2 \\ -b_2 & b_1\end{pmatrix}, \begin{pmatrix} a_3 & a_4 \\ -b_4 & b_3 \end{pmatrix} \right )
\]
and we define the morphism $f:Q_7\to Q_4$ via $(M,N)\mapsto (MN,\det M)$. As $\det(MN)=\det M\det N=(\det M)(1-\det M)$, we see that the 5-tuple $(MN,\det M)$ defines an element of $Q_4(R)$. Explicitly, the morphism is given by 
\[
f(a_1,\ldots,a_4,b_1,\ldots,b_4)=(a_1a_3-a_2b_4,a_1a_4+a_2b_3,-a_4b_2+b_1b_3,a_3b_2+b_1b_4,a_1b_1+a_2b_2).
\]
Let now $[-1]:S^0\to \gm{}$ be the morphism sending the non basepoint to $-1$. Smashing with any space $\mathscr X$, we get a morphism $\mathscr X\to \mathscr X\wedge \gm{}$ that we still denote by $[-1]$. By construction, the following diagram commutes
\[
\xymatrix{Q_7\ar[r]^-{f}\ar[d]_-{[-1]} & Q_4\ar[d]^-{[-1]} \\
Q_7\sma \gm{}\ar[r]_-{f\sma Id} & Q_4\sma \gm{}.}
\]
We next define a morphism 
\[
g:Q_4\sma \gm{}\to \A^3\setminus 0
\] 
via $(x_1,x_2,y_1,y_2,z,\alpha)\mapsto (2x_1,2x_2,(\alpha-1)z+1)$ and finally get a morphism
\[
\mathcal H:Q_7\stackrel{f}\to Q_4\stackrel{[-1]}\to Q_4\sma \gm{}\stackrel{g}\to \A^3\setminus 0.
\]

We know that $Q_7\simeq \A^4\setminus 0\simeq S^{3,4}$ in $\ho {\R}$ and it follows from \cite[\S 3.3]{Morel99} that $Q_7(\R)$ is homotopy equivalent to the real sphere $S_{\R}^3$. On the other hand, $\A^3\setminus 0\simeq S^{2,3}$ and its real realization is thus $S^2_{\R}$. The morphism $\mathcal H$ constructed above realizes in a morphism $S^3_{\R}\to S^2_{\R}$ and is therefore a multiple of the Hopf map. The next result shows that it is indeed the Hopf map.

\begin{prop}
The morphism $\mathcal H$ realizes (up to sign) in the Hopf map $S^3_{\R}\to S^2_{\R}$.
\end{prop}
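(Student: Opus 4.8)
The plan is to compute the real realization of $\mathcal H$ explicitly and identify it, via the standard quaternionic description of the Hopf map, with $S^3_\R\to S^2_\R$. First I would unwind the real points. Writing $Q_7(\R)=\{(M,N)\in M_2(\R)^2 : \det M+\det N=1\}$, the real realization is homotopy equivalent to $S^3_\R$ (as recalled in the excerpt); I would make this concrete by noting that $S^3_\R$ sits inside $Q_7(\R)$ as the locus $\{(M,N): M^tM = t\,\mathrm{Id},\ N^tN=(1-t)\,\mathrm{Id}\}$ for a suitable $t$, or more simply by choosing the deformation retraction of $Q_7(\R)\simeq \A^4(\R)\setminus 0$ onto the unit sphere $\|(a_1,\ldots,a_4,b_1,\ldots,b_4)\|=1$ with $b_i$ determined so that $\sum a_ib_i=1$. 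The cleanest bookkeeping identifies $(a_1,a_2,a_3,a_4)\in\R^4$ with a quaternion $q$ and, on the sphere, $b_i$ with the coordinates of $\bar q$; then $M$ and $N$ encode the left-multiplication-by-$q$ picture, so that $\det M = a_1^2+a_2^2$ and $\det N = a_3^2+a_4^2$ up to the identification, and $MN$ realizes quaternion multiplication.

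Next I would trace $(a_1,\ldots,a_4,b_1,\ldots,b_4)$ through the three maps. Applying $f$ gives the $5$-tuple $(MN,\det M)$; applying $[-1]$ then $g$ gives $(2(MN)_{11},\, 2(MN)_{12},\, (-1-1)\det M + 1) = (2(MN)_{11},\,2(MN)_{12},\,1-2\det M)$. On real points, after the retraction to the unit sphere, $\det M = a_1^2+a_2^2 = |z_1|^2$ where $z_1 = a_1 + i a_2$, and $1-2\det M = |z_1|^2 - |z_2|^2$ with $z_2 = a_3 + ia_4$ (using $|z_1|^2+|z_2|^2 = $ a constant along the retract); likewise the pair $(2(MN)_{11}, 2(MN)_{12})$ should work out, via the quaternion identification, to (twice the real and imaginary parts of) $\bar z_1 z_2$ or $z_1\bar z_2$. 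Thus the composite $S^3_\R\to \R^3\setminus 0$ sends $(z_1,z_2)$ with $|z_1|^2+|z_2|^2 = 1$ (after rescaling) to $(2\,\mathrm{Re}(\bar z_1 z_2),\, 2\,\mathrm{Im}(\bar z_1 z_2),\, |z_1|^2 - |z_2|^2)\in S^2_\R$, which is exactly the classical Hopf fibration $S^3_\R\to S^2_\R$. A target-side deformation retraction of $(\A^3\setminus 0)(\R) = \R^3\setminus 0$ onto $S^2_\R$ is compatible with this, so $\mathcal H$ realizes the Hopf map up to sign, the sign ambiguity coming from the orientation choices (e.g. the ordering of coordinates, or $\bar z_1 z_2$ versus $z_1 \bar z_2$).

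The main obstacle is the explicit matrix computation of $MN$ and the verification that, after the retraction onto the unit sphere with the correct choice of $b_i$ (forced by $\sum a_ib_i = 1$), the first two coordinates of $\mathcal H$ on real points genuinely match the real and imaginary parts of $\bar z_1 z_2$ rather than some other bilinear expression that merely happens to land on $S^2$. Concretely one must check: (i) the formula $f(a_1,\ldots,b_4) = (a_1a_3 - a_2 b_4,\ a_1 a_4 + a_2 b_3,\ -a_4 b_2 + b_1 b_3,\ a_3 b_2 + b_1 b_4,\ a_1 b_1 + a_2 b_2)$ already recorded in the excerpt; (ii) that substituting $b_i = a_i$ on the unit sphere (which is a legitimate representative on the retract) turns $(2(MN)_{11}, 2(MN)_{12}) = (2(a_1a_3 - a_2a_4),\ 2(a_1a_4 + a_2a_3))$ and $1 - 2(a_1^2+a_2^2) = (a_3^2+a_4^2) - (a_1^2+a_2^2)$ into the Hopf formula; and (iii) that the vector so produced is nowhere zero along the sphere, so that the homotopy onto $S^2_\R$ is valid. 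Since $(2(a_1a_3-a_2a_4), 2(a_1a_4+a_2a_3), (a_3^2+a_4^2)-(a_1^2+a_2^2))$ has squared norm $(a_1^2+a_2^2+a_3^2+a_4^2)^2 = 1$, point (iii) is automatic and simultaneously pins down the target sphere, which makes the identification with the Hopf map (up to sign) immediate.
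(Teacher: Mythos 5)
Your proposal is correct and follows essentially the same route as the paper: embed $S^3_{\R}$ into $Q_7(\R)$ by taking $b_i=a_i$ on the unit sphere (a homotopy equivalence, since projecting back to $\R^4\setminus 0$ and retracting gives the identity), push through $f$, $[-1]$ and $g$, and read off the explicit formula $(2(a_1a_3-a_2a_4),\,2(a_1a_4+a_2a_3),\,(a_3^2+a_4^2)-(a_1^2+a_2^2))$, which is the classical Hopf map. The only blemishes are in your heuristic quaternionic framing (the pair $(2(MN)_{11},2(MN)_{12})$ is $2z_1z_2$ rather than $2\bar z_1 z_2$, and $1-2\det M=|z_2|^2-|z_1|^2$, not $|z_1|^2-|z_2|^2$), but these are absorbed by the ``up to sign'' in the statement and do not affect the verified computation in your items (i)--(iii).
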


\begin{proof}
We start by exhibiting an explicit homotopy equivalences $S^3_{\R}\to Q_7(\R)$.  Concretely, let $h:S^3_{\R}\to Q_7(\R)$ be given by 
\[
(x_1,x_2,x_3,x_4)\mapsto (x_1,x_2,x_3,x_4,x_1,x_2,x_3,x_4)
\]
The composite $S^3_{\R}\stackrel {h}{\to} Q_7(\R)\to \R^4\setminus 0\to S^3_{\R}$, where the second map is the projection onto the first four factors and the third map is the usual homotopy equivalence, is easily checked to be the identity. As both the second and third maps are homotopy equivalences, so is the first. Now, an easy computation yields that the composite
\[
S^3_{\R}\stackrel{f_1}\to Q_7(\R)\stackrel{\mathcal H}\to Q_4(\R)\stackrel{[-1]}\to (Q_4\sma \gm{})(\R)\stackrel {g}\to \R^3\setminus 0
\]
maps $(x_1,x_2,x_3,x_4)$ to $(2x_1x_3-2x_2x_4,2x_1x_4+2x_2x_3,x_3^2+x_4^2-x_1^2-x_2^2)$. This is the Hopf map made explicit in \cite[\S 4.5]{Rao94}.
\end{proof}

We now check that the composite
\[
Q_7\stackrel{\mathcal H}\to \A^3\setminus 0\to SL_6/Sp_6
\]
is homotopic to the point. Using the commutative diagram
\[
\xymatrix{Q_7\ar[r]^-{f}\ar[d]_-{[-1]} & Q_4\ar[d]^-{[-1]} \\
Q_7\sma \gm{}\ar[r]_-{f\sma Id} & Q_4\sma \gm{}}
\]
we see that it suffices to show that any morphism $Q_7\sma \gm{}\to SL_6/Sp_6$ is homotopy trivial. Now we have $[Q_7\sma \gm{},SL_6/Sp_6]_{\aone}=[S^{3,5},SL/Sp]=(\mathbf{GW}_4^3)_{-5}=\mathbf{GW}_{-1}^{-2}=0$, proving the claim. Consequently, the composite
\[
[X,\A^4\setminus 0]_{\aone}\to [X,\A^3\setminus 0]_{\aone}\to W_E(X)
\]
is trivial and it remains to show that the left-hand morphism is non trivial if $\mathcal C\neq \emptyset$. To this end, recall that realization induces maps $[X,\A^4\setminus 0]_{\aone}\to [X(\R),S^3_{\R}]$ and $[X,\A^3\setminus 0]_{\aone}\to [X(\R),S^2_{\R}]$ and that the morphism $\A^4\setminus 0\to \A^3\setminus 0$ realizes into the Hopf map. In classical topology, we have the Hopf fibration
\[
S^1_{\R}\to S^3_{\R}\to S^2_{\R}
\]
As $\pi_1(S^3_{\R})=0$, it follows that the map $S^1_{\R}\to S^3_{\R}$ is homotopy trivial and therefore for any $CW$-complex $Y$ the map $[Y,S^3_{\R}]\to [Y,S^2_{\R}]$ has trivial fiber. Suppose then that we have a class in $[X,\A^4\setminus 0]_{\aone}$ realizing into a non trivial element of $[X(\R),S^3_{\R}]$. It follows that its image in $[X,\A^3\setminus 0]_{\aone}$ is non trivial and therefore that the Vaserstein symbol is not injective. Now, $[X,\A^4\setminus 0]_{\aone}\simeq H^3(X,\KMW 4)$ which surjects onto $H^3(X,I^4)=H^3(X(\R),\Z)$ and we conclude using Corollary \ref{cor:computation}.

\begin{rem}
Let $X=\Spec R$ be a smooth real affine threefold with $\mathcal C\neq \emptyset$.
Using the identification $[X,\A^n\setminus 0 ]_{\aone}=Um_n(R)/E_n(R)$, the composite
\[
[X,\A^4\setminus 0]_{\aone}\to [X,\A^3\setminus 0]_{\aone}\to W_E(X)
\]
can be reinterpreted as a complex
\[
Um_4(R)/E_4(R)\stackrel{\alpha}\to Um_3(R)/E_3(R)\stackrel V\to W_E(R).
\]
where $\alpha([a_1,a_2,a_3,a_4])=[2a_1a_3-2a_2a_4,2a_1a_4+2a_2a_3,a_3^2+a_4^2-a_1^2-a_2^2]$ for any orbit $[a_1,a_2,a_3,a_4]\in Um_4(R)/E_4(R)$. The above proof shows that any non trivial element of $Um_4(R)/E_4(R)$ (which exist in case $\mathcal C\neq \emptyset$) will have non trivial image in $Um_3(R)/E_3(R)$ under $\alpha$ providing a non trivial element in the kernel of $V$.
\end{rem}

\begin{footnotesize}
\bibliographystyle{alpha}
\bibliography{Vstein}

\begin{thebibliography}{RvdK94}

\bibitem[ADF16]{Asok14}
A.~Asok, B.~Doran, and J.~Fasel.
\newblock Smooth models of motivic spheres and the clutching construction.
\newblock To appear in IMRN. {P}reprint available at
  \texttt{http://arxiv.org/abs/1408.0413}, 2016.

\bibitem[AF14a]{Asok12b}
A.~Asok and J.~Fasel.
\newblock Algebraic vector bundles on spheres.
\newblock {\em J. Topology}, 7(3):894--926, 2014.
\newblock doi:10.1112/jtopol/jtt046.

\bibitem[AF14b]{Asok12a}
A.~Asok and J.~Fasel.
\newblock A cohomological classification of vector bundles on smooth affine
  threefolds.
\newblock {\em Duke Math. J.}, 163(14):2561--2601, 2014.

\bibitem[AF14c]{Asok14b}
A.~Asok and J.~Fasel.
\newblock An explicit {KO}-degree map and applications.
\newblock arXiv:1403.4588, 2014.

\bibitem[AF15]{Asok12c}
A.~Asok and J.~Fasel.
\newblock Splitting vector bundles outside the stable range and homotopy theory
  of punctured affine spaces.
\newblock {\em J. Amer. Math. Soc.}, 28(4):1031--1062, 2015.
\newblock Electronically published on August 7, 2014.

\bibitem[AHW15]{Asok15b}
A.~Asok, M.~Hoyois, and M.~Wendt.
\newblock Affine representability results in {${\mathbb A}^1$}-homotopy theory
  {II}: principal bundles and homogeneous spaces.
\newblock Preprint available at \texttt{http://arxiv.org/abs/1507.08020}, 2015.

\bibitem[BDM06]{Bhatwadekar06}
S.~M. Bhatwadekar, Mrinal~Kanti Das, and Satya Mandal.
\newblock Projective modules over smooth real affine varieties.
\newblock {\em Invent. Math.}, 166(1):151--184, 2006.

\bibitem[Fas11a]{Fasel08b}
Jean Fasel.
\newblock Some remarks on orbit sets of unimodular rows.
\newblock {\em Comment. Math. Helv.}, 86(1):13--39, 2011.

\bibitem[Fas11b]{Fasel09b}
Jean Fasel.
\newblock Stably free modules over smooth affine threefolds.
\newblock {\em Duke Math. J.}, 156(1):33--49, 2011.

\bibitem[FRS12]{Fasel10b}
Jean Fasel, Ravi~A. Rao, and Richard~G. Swan.
\newblock On stably free modules over affine algebras.
\newblock {\em Publ. Math. Inst. Hautes \'Etudes Sci.}, 116(1):223--243, 2012.

\bibitem[GR14]{Gupta14b}
N.~Gupta and D.~R. Rao.
\newblock On the non-injectivity of the {V}aserstein symbol in dimension three.
\newblock {\em J. Algebra}, 399:378--388, 2014.

\bibitem[Hat02]{Hatcher02}
A.~Hatcher.
\newblock {\em Algebraic topology}.
\newblock Cambridge University Press, Cambridge, 2002.

\bibitem[Jac16]{Jacobson16}
J.~A. Jacobson.
\newblock Real cohomology and the powers of the fundamental ideal in the {W}itt
  ring.
\newblock Preprint., 2016.

\bibitem[Kne77]{Knebusch76}
Manfred Knebusch.
\newblock Symmetric bilinear forms over algebraic varieties.
\newblock In {\em Conference on {Q}uadratic {F}orms---1976 ({P}roc. {C}onf.,
  {Q}ueen's {U}niv., {K}ingston, {O}nt., 1976)}, number~46 in Queen's Papers in
  Pure and Appl. Math., pages 103--283. Queen's Univ., 1977.

\bibitem[KR16]{Kolte16}
S.~Kolte and D.~R. Rao.
\newblock Odd orthogonal matrices and the non-injectivity of the {V}aserstein
  symbol.
\newblock To appear in J. Algebra., 2016.

\bibitem[Mor05]{Morel05}
F.~Morel.
\newblock Milnor's conjecture on quadratic forms and mod 2 motivic complexes.
\newblock {\em Rend. Sem. Mat. Univ. Padova}, 114:63--101 (2006), 2005.

\bibitem[Mor12]{Morel08}
Fabien Morel.
\newblock {\em $\mathbb {A}^1$-{A}lgebraic {T}opology over a {F}ield}, volume
  2052 of {\em Lecture Notes in Math.}
\newblock Springer, New York, 2012.

\bibitem[MV99]{Morel99}
Fabien Morel and Vladimir Voevodsky.
\newblock {${\bf A}\sp 1$}-homotopy theory of schemes.
\newblock {\em Inst. Hautes \'Etudes Sci. Publ. Math.}, 90:45--143 (2001),
  1999.

\bibitem[Ros96]{Rost96}
Markus Rost.
\newblock Chow groups with coefficients.
\newblock {\em Doc. Math.}, 1:No. 16, 319--393 (electronic), 1996.

\bibitem[RvdK94]{Rao94}
Ravi~A. Rao and Wilberd van~der Kallen.
\newblock Improved stability for {$SK\sb 1$} and {$WMS\sb d$} of a non-singular
  affine algebra.
\newblock {\em Ast\'erisque}, 226:11, 411--420, 1994.
\newblock $K$-theory (Strasbourg, 1992).

\bibitem[Sch10a]{Schlichting09}
Marco Schlichting.
\newblock Hermitian {$K$}-theory of exact categories.
\newblock {\em J. K-Theory}, 5(1):105--165, 2010.

\bibitem[Sch10b]{Schlichting10}
Marco Schlichting.
\newblock The {M}ayer-{V}ietoris principle for {G}rothendieck-{W}itt groups of
  schemes.
\newblock {\em Invent. Math.}, 179(2):349--433, 2010.

\bibitem[VS76]{Suslin76}
L.~N. Vaser{\v{s}}te{\u\i}n and A.~A. Suslin.
\newblock Serre's problem on projective modules over polynomial rings, and
  algebraic {$K$}-theory.
\newblock {\em Izv. Akad. Nauk SSSR Ser. Mat.}, 40(5):993--1054, 1199, 1976.

\end{thebibliography}
\end{footnotesize}
\Addresses
\end{document}